\newtheorem{theorem}{Theorem}
\newtheorem{proposition}[theorem]{Proposition}
\newtheorem{question}[theorem]{Question}
\newtheorem{corollary}[theorem]{Corollary}
\theoremstyle{definition}
\newtheorem{example}[theorem]{Example}
\newtheorem{construction}[theorem]{Construction}
\theoremstyle{remark}
\newtheorem{remark}[theorem]{Remark}
 \newcommand{\im}{\operatorname{im}}
\newcommand{\rank}{\operatorname{rank}}
\newcommand{\xra}[1]{\xrightarrow{#1}}
\newcommand{\Seg}{\operatorname{Seg}}
\newcommand{\rr}{\operatorname{r}}
\newcommand{\crr}{\operatorname{cr}}
\newcommand{\length}{\operatorname{length}}
\newcommand{\hook}{{\: \lrcorner \:}}
\begin{document}

\author{Maciej Ga\l{}\k{a}zka}
\address{Maciej Ga\l{}\k{a}zka, Faculty of Mathematics, Computer Science, and Mechanics, University of Warsaw, ul. Banacha 2, 02-097 Warszawa, Poland}
\email{mgalazka@mimuw.edu.pl}
\title{Vector bundles give equations of cactus varieties}
\date{\today}
\keywords{secant variety, Waring rank, cactus rank, homogeneous variety, apolarity, catalecticant}
\subjclass[2010]{14N15, 14M17}

\begin{abstract}
  We prove that vector bundles give equations of cactus varieties. We derive from it that equations coming from vector bundles are not enough to
  define secant varieties of Veronese varieties in general.
\end{abstract}

\maketitle

\section{Introduction}

Suppose $W$ is a vector space over an algebraically closed field $\mathbb{K}$. We denote by $W^*$ the dual vector space. Let $X \subseteq \mathbb{P} W$ be a
non-degenerate (i.e.\ not contained in a hyperplane) projective variety over $\mathbb{K}$. For $F \in W$ let us define the notion of $X$-rank.
\begin{equation*}
  \rr_X(F) = \min \{r \in \mathbb{Z}_{\geq 0} | [F] \in \langle p_1,\dots, p_r\rangle \text{ for some } p_1,\dots,p_r \in X \}\text{,}
\end{equation*}
where $[F]$ denotes the class of $F$ in the projective space, and $\langle \cdot \rangle$ denotes the (projective) linear span. The $r$-th secant variety is
\begin{align*}
  \sigma_r(X) &= \overline {\{ [F] \in \mathbb{P}W | \rr_X(F) \leq r \}} \\
  &= \overline{\bigcup_{p_1,\dots,p_r \in X} \langle p_1,\dots, p_r\rangle}\text{,}
\end{align*}
where overline denotes the Zariski closure. The variety $X$ is often fixed, so we omit $X$ in the $X$-rank and $\rr_X(F)$, and write simply rank and
$\rr(F)$.

In this article we investigate the problem of finding set-theoretic equations of $\sigma_r(X)$ and the problem of giving lower bounds for rank. The
following proposition, which is given for instance in \cite[beginning of Chapter 7]{landsberg_tensorbook}, is useful:
\begin{proposition}\label{proposition:catalecticant}
  Suppose we have a linear map $j : W \to A\otimes B$, where $A, B$ are finite dimensional vector spaces. Let $k$ be a positive
  integer such that for every $p \in X$ we have
  \begin{equation*}
    \rank(j(p)) \leq k\text{.}
  \end{equation*}
  Then for any $F \in W$ we have
  \begin{equation*}
    \rr(F) \geq \frac {\rank j(F)} k \text{.}
  \end{equation*}
  There is an equivalent way of formulating the conclusion: suppose $r$ is a positive integer, then 
  \begin{equation*}
    j(\sigma_r(X)) \subseteq \sigma_{rk}(\Seg(\mathbb{P}A\times \mathbb{P}B)) \text{.}
  \end{equation*}
  Here $\Seg(\mathbb{P}A\times \mathbb{P}B) \subseteq \mathbb{P}(A\otimes B)$ is the image of the Segre embedding. Hence the $(rk + 1)$-th minors of
  $j \in A\otimes B \otimes W^*$, interpreted as a matrix with entries in $W^*$, give equations for $\sigma_r(X)$.
\end{proposition}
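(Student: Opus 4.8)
The plan is to prove the pointwise rank inequality directly, and then deduce the secant-variety statement and the minor equations from the closedness of determinantal varieties.

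First I would establish the bound $\rr(F) \geq \rank(j(F))/k$. Suppose $\rr(F) = r$, so that $[F] \in \langle p_1, \dots, p_r \rangle$ for some $p_1, \dots, p_r \in X$; picking representatives $v_i \in W$ of the $p_i$, this means $F = \sum_{i=1}^{r} c_i v_i$ for scalars $c_i \in \mathbb{K}$. Applying the linear map $j$ and using subadditivity of matrix rank together with its invariance under rescaling, I get
\begin{equation*}
  \rank(j(F)) = \rank\Bigl(\sum_{i=1}^{r} c_i\, j(v_i)\Bigr) \leq \sum_{i=1}^{r} \rank(j(v_i)) \leq rk\text{,}
\end{equation*}
where the last inequality is the hypothesis $\rank(j(p_i)) \leq k$ (which is well defined since rank is scale-invariant). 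Dividing by $k$ gives the claim.

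Next I would reformulate this for secant varieties. The key classical fact to invoke is that the secant variety of the Segre of two projective spaces is the determinantal variety, $\sigma_{s}(\Seg(\mathbb{P}A \times \mathbb{P}B)) = \{[M] \in \mathbb{P}(A \otimes B) : \rank(M) \leq s\}$, which is already Zariski closed (rank and border rank agree for matrices), cut out by all $(s+1) \times (s+1)$ minors. Working on $W$ rather than with the induced rational map, I would consider the closed set $Z = \{F \in W : \rank(j(F)) \leq rk\}$, closed because the $(rk+1)$-minors of $j(F)$ are polynomials in $F$. By the first step every $F$ with $\rr(F) \leq r$ lies in $Z$, so the Zariski closure $\sigma_r(X)$ is contained in the projectivization of $Z$; for $[F] \in \sigma_r(X)$ we then have $\rank(j(F)) \leq rk$, i.e.\ $[j(F)] \in \sigma_{rk}(\Seg(\mathbb{P}A \times \mathbb{P}B))$, which is the desired inclusion. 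Viewing $j$ as an element of $A \otimes B \otimes W^*$, its $(rk+1) \times (rk+1)$ minors are homogeneous polynomials on $W$ vanishing exactly on $Z \supseteq \sigma_r(X)$, hence furnish set-theoretic equations.

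The main obstacle I anticipate is not the algebra but keeping the passage to closures clean: the induced map $\mathbb{P}W \dashrightarrow \mathbb{P}(A \otimes B)$ is only rational, with indeterminacy along $\mathbb{P}(\ker j)$, so I prefer to argue entirely with the linear map $j$ and the affine closed set $Z$, where continuity and closedness are transparent and the $(rk+1)$-minors manifestly define the equations. A secondary point to verify carefully is the identification $\sigma_s(\Seg(\mathbb{P}A \times \mathbb{P}B)) = \{\rank \leq s\}$, which is exactly what makes the target a genuinely closed determinantal condition and lets the minor equations pull back to $\sigma_r(X)$ without taking any further closure.
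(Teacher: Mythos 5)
Your proof is correct and is essentially the standard argument: subadditivity of matrix rank gives the pointwise bound on the locus of $X$-rank $\leq r$, and the closedness of the determinantal locus $\{\rank j(F)\leq rk\}$ lets you pass to the Zariski closure $\sigma_r(X)$. The paper does not prove this proposition itself but cites \cite[beginning of Chapter 7]{landsberg_tensorbook}, where the same reasoning is used, so there is nothing to contrast; your care in working with the affine set $Z$ rather than the rational map $\mathbb{P}W\dashrightarrow\mathbb{P}(A\otimes B)$ is a sensible way to handle the indeterminacy along $\mathbb{P}(\ker j)$.
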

The problem is to find linear maps $j : W \to A\otimes B$ which give useful equations. In \cite{landsberg_ottaviani_equations} two methods are given.
First we follow the construction in \cite[Point 1.3 and Section 5]{landsberg_ottaviani_equations}.

\begin{construction}\label{construction:vector_bundle_version}
  Suppose the embedding $X \subseteq \mathbb{P}W$ is given by the complete linear system $H^0(X, \mathcal{L})$ of a very ample line bundle
  $\mathcal{L}$. This gives $W \cong H^0(X, \mathcal{L})^*$. Let $\mathcal{E}$ be a locally free sheaf of finite rank on $X$. Consider the canonical
  morphism $\mathcal{E}\otimes \mathcal{L} \otimes \mathcal{E}^\vee \to \mathcal{L}\text{,}$ where $\mathcal{E}^\vee$ denotes the dual sheaf. This
  yields
  \begin{equation*}
    H^0(X, \mathcal{E}) \otimes H^0(X, \mathcal{L}\otimes \mathcal{E}^\vee) \to H^0(X, \mathcal{L})\text{,}
  \end{equation*}
  which, after rearranging the factors, gives the map 
  \begin{equation}\label{equation:j_apolarity}
    j = j^\mathcal{E} \colon H^0(X, \mathcal{L})^* \to H^0(X, \mathcal{E})^* \otimes H^0(X, \mathcal{L}\otimes \mathcal{E}^\vee)^*\text{.}
  \end{equation}
  In this case we have $j(x) \leq \rank \mathcal{E}$ for every $x \in X$, see \cite[Proposition 2.20]{teitler_geometric_lower_bounds} or
  \cite[Proposition 5.1.1]{landsberg_ottaviani_equations}. Hence Proposition \ref{proposition:catalecticant} gives a lower bound for rank.
\end{construction}

Similarly, define the notion of cactus $X$-rank.
\begin{equation*}
  \crr(F) = \min \{r \in \mathbb{Z}_{\geq 0} | [F] \in \langle R \rangle\text{, } R \hookrightarrow X \text{ zero-dimensional, } \length R \leq r \}
  \text{.}
\end{equation*}
We denote by $\length R$ the length of a zero-dimensional scheme $R$, i.e.\ $h^0(X, R)$. Here $\langle \cdot \rangle$ denotes the (projective) linear span of a
scheme.

The $r$-th cactus variety is
\begin{align*}
  \kappa_r(X) &= \overline {\{ [F] \in \mathbb{P}(H^0(X,\mathcal{L})^*) | \crr(F) \leq r \}} \\
  &= \overline{\bigcup_{R \hookrightarrow X\text{, } \length R \leq r} \langle R\rangle}\text{.}
\end{align*}
For reasons to study cactus variety and cactus rank, see \cite[Subsection 1.3]{mgalazka_multigraded_apolarity}, or \cite[Subsections 1.2 and
1.3]{nisiabu_jabu_cactus}. In \cite{nisiabu_jabu_cactus}, using the idea of cactus varieties, equations for many secant varieties were found. Another
motivation is Corollary \ref{corollary:not_enough_equations}.

It is an interesting question when Proposition \ref{proposition:catalecticant} works for cactus rank and cactus variety, i.e.\ for what $X$, $j : W
\to A\otimes B$, the following equivalent statements
\begin{gather*}
  \crr(F) \geq \frac {\rank j(F)} k \text{ for all } F \in W\text{,}\\
  j(\kappa_r(X)) \subseteq \sigma_{rk}(\Seg(\mathbb{P}A\times \mathbb{P}B)) \text{ for all } r \text{ natural}
\end{gather*}
are true. Here $k$ is defined as in Proposition \ref{proposition:catalecticant}.

The following example shows that Proposition \ref{proposition:catalecticant} does not work for cactus rank in general.

\begin{example}[\cite{jabu_private}]
  Suppose $A, B$ are $5$-dimensional vector spaces. Consider the Segre embedding
  \begin{equation*}
    \mathbb{P}A \times \mathbb{P}B \hookrightarrow \mathbb{P}(A \otimes B)\text{.}
  \end{equation*}
  Let $X = \sigma_2(\mathbb{P}A \times \mathbb{P} B)$, so that $X$ is the projectivization of the set of matrices of rank at most $2$. The singular
  points of $X$ are the matrices of rank $1$. Take $j : A\otimes B \to A\otimes B$ to be an isomorphism. The highest rank of $j(F)$ for $[F] \in X$ is
  $2$. Hence Proposition \ref{proposition:catalecticant} for $r = 2$ says that the determinant of $A\otimes B$ gives is a non-trivial equation of
  $\sigma_2(X)$.  But $\kappa_2(X)$ is $\mathbb{P}(A\otimes B)$. This follows from the fact that the projectivized tangent space at any singular point
  of $X$ is $\mathbb{P}(A\otimes B)$.
\end{example}

In this paper, we show that when we choose $j$ as in Construction \ref{construction:vector_bundle_version}, then Proposition
\ref{proposition:catalecticant} works for cactus variety and cactus rank.
\begin{theorem}
  \label{theorem:vector_bundle_bound}
  For any $F \in H^0(X, \mathcal{L})^*$, and any locally free sheaf $\mathcal{E}$ on $X$ of rank $e$, we get
  \begin{equation*}
    \crr(F) \geq \frac{\rank{j(F)}}{e}\text{.}
  \end{equation*}
  In other words, if we fix bases of $H^0(X, \mathcal{E})$ and $H^0(X, \mathcal{L}\otimes \mathcal{E}^\vee)^*$, then for any $r \geq 0$ the $(re +
  1)$-th minors of $j$ (which is a matrix with entries in $H^0(X, \mathcal{L})$) give equations for $\kappa_r(X)$.
\end{theorem}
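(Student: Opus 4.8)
The plan is to reduce the theorem to the local claim that for every zero-dimensional subscheme $R\hookrightarrow X$ and every $F\in H^0(X,\mathcal L)^*$ with $[F]\in\langle R\rangle$ one has $\rank j(F)\le e\cdot\length R$. Granting this, the inequality $\crr(F)\ge \rank j(F)/e$ is obtained by taking $R$ that realizes the cactus rank of $F$, and the statement about minors follows formally: the $(re+1)$-minors of $j$ are degree-$(re+1)$ polynomials on $H^0(X,\mathcal L)^*$ that, by the local claim, vanish on $\bigcup_{\length R\le r}\langle R\rangle$, hence on its Zariski closure $\kappa_r(X)$.

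To prove the local claim I would first make the span $\langle R\rangle$ explicit. Let $\rho\colon H^0(X,\mathcal L)\to H^0(R,\mathcal L|_R)$ be the restriction map. A hyperplane $\{s=0\}$, $s\in H^0(X,\mathcal L)$, contains $R$ precisely when $s\in\ker\rho$, so $\langle R\rangle=\mathbb P(\operatorname{Ann}\ker\rho)=\mathbb P(\im\rho^*)$, where $\rho^*\colon H^0(R,\mathcal L|_R)^*\to H^0(X,\mathcal L)^*$ is the dual map. Thus the hypothesis $[F]\in\langle R\rangle$ means exactly that $F=\rho^*(G)$ for some $G\in H^0(R,\mathcal L|_R)^*$.

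Next I would restrict the defining multiplication to $R$. Since pullback along $R\hookrightarrow X$ is functorial and commutes with duals of locally free sheaves, the canonical map $\mathcal E\otimes\mathcal L\otimes\mathcal E^\vee\to\mathcal L$ restricts to the analogous map on $R$; taking global sections gives a commutative square relating the multiplication $\mu_X$ on $X$ to the multiplication $\mu_R$ on $R$, with restriction maps $r_{\mathcal E}$, $r'$ on the left factors and $\rho$ on the right. Writing $j_R\colon H^0(R,\mathcal L|_R)^*\to H^0(R,\mathcal E|_R)^*\otimes H^0(R,(\mathcal L\otimes\mathcal E^\vee)|_R)^*$ for the map built from $\mu_R$, the identity $\rho\circ\mu_X=\mu_R\circ(r_{\mathcal E}\otimes r')$ together with $F=\rho^*(G)$ yields, after a direct computation, the factorization $j(F)=(r_{\mathcal E}^*\otimes (r')^*)\bigl(j_R(G)\bigr)$.

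Finally, $j_R(G)$ lies in $H^0(R,\mathcal E|_R)^*\otimes H^0(R,(\mathcal L\otimes\mathcal E^\vee)|_R)^*$, so its rank as a matrix is at most $\dim H^0(R,\mathcal E|_R)$; as $R$ is zero-dimensional and $\mathcal E$ is locally free of rank $e$, the sheaf $\mathcal E|_R$ is free of rank $e$ and $\dim H^0(R,\mathcal E|_R)=e\cdot\length R$. Applying the linear maps $r_{\mathcal E}^*$ and $(r')^*$ to the two tensor factors cannot increase the rank, so $\rank j(F)\le \rank j_R(G)\le e\cdot\length R$, proving the local claim. I expect the step requiring the most care to be the identification $\langle R\rangle=\mathbb P(\im\rho^*)$ together with the factorization through $R$: this is precisely where the passage from reduced points to arbitrary zero-dimensional schemes takes place, and it explains why the length-one estimate $\rank j(x)\le e$ recorded in Construction \ref{construction:vector_bundle_version} upgrades to the bound $\rank j(F)\le e\cdot\length R$ needed for cactus rank.
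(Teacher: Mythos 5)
Your proof is correct and takes essentially the same route as the paper's: the identification $\langle R\rangle=\mathbb{P}(\im\rho^*)$ is the apolarity lemma (Proposition \ref{proposition:general_apolarity}) in dual form, your commutative square is the diagram used to prove Proposition \ref{proposition:nonabelian_apolarity}, and the count $\dim H^0(R,\mathcal{E}|_R)=e\cdot\length R$ is Proposition \ref{proposition:zero_dimensional}. The only difference is cosmetic: you bound $\rank j(F)$ from above by factoring the catalecticant through $H^0(R,\mathcal{E}|_R)$, whereas the paper bounds its kernel from below by $H^0(X,\mathcal{I}_R\otimes\mathcal{E})$ --- the two sides of the same left-exact restriction sequence.
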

We prove Theorem \ref{theorem:vector_bundle_bound} in Section \ref{section:proof}.

As a corollary of this Theorem and main theorems of \cite{nisiabu_jabu_cactus}, we get that equations coming from vector bundles are not enough to
define secant varieties of the Veronese embedding in general. To be more precise:
\begin{corollary}\label{corollary:not_enough_equations}
  Let $v_d : \mathbb{P} V \to \mathbb{P} S^d V$ be the $d$-th Veronese embedding of the $n$-dimensional projective space $\mathbb{P} V$. Let $r$ be a
  positive integer. For each vector bundle $\mathcal{E}$ on $\mathbb{P} V$ of rank $e$, let $Z_{r,\mathcal{E}} \subseteq \mathbb{P}S^d V$ be the
  vanishing set of equations coming from $(re + 1)$-th minors of $\mathcal{E}$, and let
  \begin{equation*}
    Z_r  = \bigcap_{\mathcal{E}} Z_{r, \mathcal{E}}\text{.}
  \end{equation*}
  Suppose either
  \begin{itemize}
    \item $n \geq 6$ and $r \geq 14$ or
    \item $n = 5$ and $r \geq 42$ or
    \item $n = 4$ and $r \geq 140$.
  \end{itemize}
  Suppose $d \geq 2r -1$. Then $\sigma_r(v_d(\mathbb{P}V)) \subsetneq Z_r$.
\end{corollary}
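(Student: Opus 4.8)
The plan is to realize $\sigma_r(v_d(\mathbb{P}V))$ inside $Z_r$ as the locus of forms of genuine rank $\le r$, and to separate it from $Z_r$ using a form whose minimal apolar scheme is zero-dimensional of length $r$ but \emph{non-smoothable}. Concretely, I would establish the chain of inclusions
\begin{equation*}
  \sigma_r\bigl(v_d(\mathbb{P}V)\bigr) \subseteq \kappa_r\bigl(v_d(\mathbb{P}V)\bigr) \subseteq Z_r
\end{equation*}
and then show that the first inclusion is strict under the stated numerical hypotheses. Since $\kappa_r \subseteq Z_r$, any point of $\kappa_r \setminus \sigma_r$ automatically lies in $Z_r \setminus \sigma_r$, which already forces $\sigma_r \subsetneq Z_r$; so the whole corollary reduces to producing one separating point.

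The two inclusions are the easy part. The inclusion $\sigma_r \subseteq \kappa_r$ is formal: a span of $r$ reduced points is the span of a reduced, hence zero-dimensional, subscheme of length $r$, and taking Zariski closures preserves the containment. For $\kappa_r \subseteq Z_r$ I would apply Theorem \ref{theorem:vector_bundle_bound} directly: for every vector bundle $\mathcal{E}$ of rank $e$ on $\mathbb{P}V$ the $(re+1)$-th minors of $j^{\mathcal{E}}$ vanish on $\kappa_r(v_d(\mathbb{P}V))$, so $\kappa_r \subseteq Z_{r,\mathcal{E}}$; intersecting over all $\mathcal{E}$ gives $\kappa_r \subseteq Z_r$. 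Thus the contribution of our Theorem to the corollary is exactly this one inclusion.

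It remains to produce a point of $\kappa_r \setminus \sigma_r$, and here I would invoke the main theorems of \cite{nisiabu_jabu_cactus}. For $d \ge 2r-1$ those results identify $\sigma_r(v_d(\mathbb{P}V))$ with the closure of the set of forms $F$ apolar to a \emph{smoothable} zero-dimensional scheme of length $\le r$, while $\kappa_r$ is the analogous locus without the smoothability requirement. Hence $\sigma_r = \kappa_r$ would force every length-$r$ scheme relevant to apolarity to be smoothable. I would therefore take a non-smoothable Gorenstein zero-dimensional scheme $R \hookrightarrow \mathbb{P}V$ of length $r$ together with a form $F$ whose apolar scheme is exactly $R$; then $[F] \in \langle v_d(R)\rangle \subseteq \kappa_r$ while $[F] \notin \sigma_r$. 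The three numerical hypotheses are precisely the ranges in which such non-smoothable Gorenstein schemes are known to exist: the classical example with Hilbert function $(1,6,6,1)$ of length $14$ in $\mathbb{P}^6$ covers $n \ge 6$, $r \ge 14$ (adjoin $r-14$ general reduced points to reach any larger length, and embed $\mathbb{P}^6 \subseteq \mathbb{P}^n$ for $n > 6$), with the thresholds $r \ge 42$ for $n=5$ and $r \ge 140$ for $n = 4$ coming from the corresponding lower-embedding-dimension constructions recorded in \cite{nisiabu_jabu_cactus}.

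The main obstacle is this last step: one must certify that the chosen $F$ genuinely fails to lie in $\sigma_r$, not merely that $R$ is abstractly non-smoothable. This is the delicate point that the hypothesis $d \ge 2r-1$ is designed to handle in \cite{nisiabu_jabu_cactus}: in that range membership $[F] \in \sigma_r$ is equivalent to $F$ being apolar to a \emph{smoothable} scheme of length $\le r$, and the catalecticant of such an $F$ attaining maximal rank $r$ pins down $R$ as its minimal apolar scheme. Non-smoothability of $R$ then transfers directly to $[F] \notin \sigma_r$. Granting this input from \cite{nisiabu_jabu_cactus}, the sandwich above yields a point of $Z_r$ outside $\sigma_r$, and the strict containment $\sigma_r(v_d(\mathbb{P}V)) \subsetneq Z_r$ follows.
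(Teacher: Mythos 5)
Your proposal is correct and follows essentially the same route as the paper: the chain $\sigma_r \subseteq \kappa_r \subseteq Z_r$ with $\kappa_r \subseteq Z_r$ supplied by Theorem \ref{theorem:vector_bundle_bound}, and the strict inclusion $\sigma_r \subsetneq \kappa_r$ imported from \cite[Proposition 6.2 and Theorem 1.4]{nisiabu_jabu_cactus} under the stated numerical hypotheses. Your additional discussion of non-smoothable Gorenstein schemes is just an unpacking of what those cited results already provide; the paper simply invokes them directly.
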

\begin{proof}
  From \cite[Proposition 6.2 and Theorem 1.4]{nisiabu_jabu_cactus} for such $n, d, r$ we have $\sigma_r(v_d(\mathbb{P}V)) \subsetneq
  \kappa_r(v_d(\mathbb{P}V))$. Since from Theorem \ref{theorem:vector_bundle_bound} we know that $\kappa_r(v_d(\mathbb{P}V)) \subseteq Z_r$, we get
  the desired result.
\end{proof}

In Section \ref{section:homogeneous_and_reflexive} we pose some questions about the other version of the Landsberg-Ottaviani method and some
generalizations.

\subsection{Acknowledgements}
I would like to thank Jaros{\l}aw Buczy{\'n}ski for drawing my attention to the problem of equations given by vector bundles and many helpful
discussions. I also thank J. M. Landsberg and Giorgio Ottaviani for suggesting to explore the homogeneous version of the method. I was supported by
Warsaw Center of Mathematics and Computer Science financed by Polish program KNOW.

\section{Proof of Theorem \ref{theorem:vector_bundle_bound}}\label{section:proof}

For a closed subscheme $i\colon R \hookrightarrow X$, $\langle R \rangle$ denotes its linear span in $\mathbb{P}(H^0(X, \mathcal{L})^*)$, and
$\mathcal{I}_R$ denotes its ideal sheaf on $X$. Recall that for any locally free sheaf $\mathcal{E}$ of finite rank on $X$, the vector subspace
$H^0(X, \mathcal{I}_R \otimes \mathcal{E}) \subseteq H^0(X, \mathcal{E})$ consists of the sections which pull back to zero on $R$.

Let $(\cdot \hook \cdot): H^0(X, \mathcal{L}) \otimes H^0(X, \mathcal{L})^* \to \mathbb{K}$ denote the natural pairing. 

\begin{proposition}[Apolarity lemma, general version]
  \label{proposition:general_apolarity}
  Let $F \in H^0(X, \mathcal{L})^*$ be a non-zero element. Then for any closed subscheme $i : R \hookrightarrow X$ we have
  \begin{equation*}
    F \in \langle R \rangle \iff H^0(X, \mathcal{I}_R \otimes \mathcal{L}) \hook F = 0\text{.}
  \end{equation*}
\end{proposition}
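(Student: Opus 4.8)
The plan is to reduce the statement to the elementary double-annihilator duality of finite-dimensional vector spaces, once I have identified the subspace of linear forms on $W = H^0(X,\mathcal{L})^*$ that vanish on $R$ with $H^0(X, \mathcal{I}_R \otimes \mathcal{L})$. Throughout I use the identification $W^* = H^0(X,\mathcal{L})$ coming from the fact that the embedding is given by the complete linear system, so that the pairing $(\cdot \hook \cdot)$ is precisely the evaluation of linear functionals on $W$. Under this identification a nonzero $s \in H^0(X,\mathcal{L})$ defines a hyperplane $H_s = \{[G] \in \mathbb{P}W : s \hook G = 0\}$, and every hyperplane of $\mathbb{P}W$ arises this way. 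Recall that the linear span $\langle R \rangle$ is, by definition, the intersection of all hyperplanes of $\mathbb{P}W$ containing $R$ as a closed subscheme.

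The key geometric step is the identification: for nonzero $s \in H^0(X,\mathcal{L})$, the hyperplane $H_s$ contains $R$ if and only if $s \in H^0(X, \mathcal{I}_R \otimes \mathcal{L})$. First I would argue that, since $R \subseteq X$ and the embedding is by the complete system, $H_s$ contains the subscheme $R$ exactly when the section $s$, viewed in $H^0(X,\mathcal{L})$, pulls back to zero along $i : R \hookrightarrow X$. Tensoring the structure sequence $0 \to \mathcal{I}_R \to \mathcal{O}_X \to \mathcal{O}_R \to 0$ with the locally free sheaf $\mathcal{L}$ keeps it exact, and taking global sections gives
\[
  0 \to H^0(X, \mathcal{I}_R \otimes \mathcal{L}) \to H^0(X,\mathcal{L}) \xra{\text{restr.}} H^0(R, \mathcal{L}|_R)\text{.}
\]
Thus $s$ pulls back to zero on $R$ precisely when $s \in H^0(X, \mathcal{I}_R \otimes \mathcal{L})$, which is the claimed identification; it exhibits $H^0(X, \mathcal{I}_R \otimes \mathcal{L})$ as the annihilator $\langle R \rangle^\perp \subseteq W^*$ of the span.

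With this in hand the conclusion is pure linear algebra. Writing $U = H^0(X, \mathcal{I}_R \otimes \mathcal{L}) \subseteq W^*$, the span is $\langle R \rangle = \bigcap_{s \in U \setminus \{0\}} H_s = \mathbb{P}(U^\perp)$, where $U^\perp = \{G \in W : s \hook G = 0 \text{ for all } s \in U\}$. Hence $F \in \langle R \rangle$ if and only if $F \in U^\perp$, that is, if and only if $H^0(X, \mathcal{I}_R \otimes \mathcal{L}) \hook F = 0$, which is exactly the desired equivalence. The degenerate case $U = 0$ (that is, $R$ non-degenerate in $\mathbb{P}W$) is consistent: then $\langle R \rangle = \mathbb{P}W$ and the right-hand condition holds vacuously.

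The step I expect to be the main obstacle is the scheme-theoretic identification in the middle paragraph: one must check carefully that containment of the closed subscheme $R$ in the hyperplane $H_s$ is faithfully detected by the vanishing of the global section $s$ on $R$, and not merely by its vanishing at the reduced points of $R$. This is where the completeness of the linear system $H^0(X,\mathcal{L})$ and the local freeness of $\mathcal{L}$ (ensuring exactness of $- \otimes \mathcal{L}$, so that $H^0(X, \mathcal{I}_R \otimes \mathcal{L})$ really is the kernel of restriction to $R$) are essential; the remaining duality argument is routine.
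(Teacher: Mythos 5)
Your argument is correct and follows essentially the same route as the paper: identify the hyperplanes containing $R$ with the sections in $H^0(X, \mathcal{I}_R \otimes \mathcal{L})$ via pullback to $R$, then conclude by the standard duality between a linear span and its annihilator. The only difference is cosmetic --- you spell out via the twisted ideal-sheaf sequence why $H^0(X, \mathcal{I}_R \otimes \mathcal{L})$ is exactly the kernel of restriction to $R$, a fact the paper simply recalls without proof.
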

Proposition \ref{proposition:general_apolarity} was proven independently in \cite[Proposition 3.7]{mgalazka_multigraded_apolarity} and \cite[Lemma
1.3]{toric_ranestad}. We include a proof here for the reader's convenience.
\begin{proof}
  Take any $s \in H^0(X, \mathcal{L})$, let $H_s$ be the corresponding hyperplane in $H^0(X, \mathcal{L})^*$. Then,
  \begin{equation*}
    \langle R \rangle \subseteq H_s \iff i^*(s) = 0 \iff s \in H^0(X, \mathcal{I}_R \otimes \mathcal{L}) \text{.}
  \end{equation*}
  Below we identify sections $s \in H^0(X, \mathcal{L})$ with hyperplanes $H_s$ in $H^0(X, \mathcal{L})^*$. Then for any $R$
  \begin{align*}
    F \in \langle R \rangle & \iff \forall_{s\in H^0(X, \mathcal{L})}(\langle R \rangle \subseteq H_s \implies F \in H_s) \\
    & \iff \forall_{s\in H^0(X, \mathcal{L})}(s \in H^0(X, \mathcal{I}_R \otimes \mathcal{L}) \implies F \in H_s) \\
    & \iff \forall_{s\in H^0(X, \mathcal{L})}(s \in H^0(X, \mathcal{I}_R \otimes \mathcal{L}) \implies s \hook F = 0) \\
    & \iff H^0(X, \mathcal{I}_R \otimes \mathcal{L}) \hook F = 0 \text{.}
  \end{align*}
\end{proof}
Take a vector bundle (i.e.\ a locally free sheaf) $\mathcal{E}$ on $X$. Let $\hook$ be the map
\begin{equation*}
  H^0(X, \mathcal{E}) \otimes H^0(X, \mathcal{L})^* \xra{\cdot\hook\cdot} H^0(X, \mathcal{L}\otimes \mathcal{E}^\vee)^*
\end{equation*}
given by rearranging terms of Equation \eqref{equation:j_apolarity}. The notation agrees with the one in Proposition
\ref{proposition:general_apolarity} when we set $\mathcal{E} = \mathcal{L}$. Then if we fix $F \in H^0(X, \mathcal{L})^*$, we get the so-called
catalecticant homomorphism $C_F^\mathcal{E} = \cdot \hook F$
\begin{equation*}
  C_F^\mathcal{E} : H^0(X, \mathcal{E}) \to H^0(X, \mathcal{L} \otimes \mathcal{E}^\vee)^*
\end{equation*}
corresponding to the tensor
\begin{equation*}
  j^\mathcal{E}(F) \in H^0(X, \mathcal{E})^* \otimes H^0(X, \mathcal{L} \otimes \mathcal{E}^\vee)^*\text{.}
\end{equation*}
Notice that $\rank C_F^\mathcal{E} = \rank j^\mathcal{E}(F)$.
\begin{proposition}\label{proposition:nonabelian_apolarity}
  Suppose $R \hookrightarrow X$ is a subscheme of $X$. Now, if $F \in \langle R \rangle$, then
  \begin{equation*}
    H^0(X, \mathcal{I}_R \otimes \mathcal{E}) \subseteq \ker C_F^\mathcal{E}\text{.}
  \end{equation*}
\end{proposition}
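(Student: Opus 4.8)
The plan is to reduce the statement to the line-bundle apolarity result in Proposition \ref{proposition:general_apolarity} by testing the functional $C_F^\mathcal{E}(s)$ against arbitrary elements. Fix a section $s \in H^0(X, \mathcal{I}_R \otimes \mathcal{E})$; I must show $s \in \ker C_F^\mathcal{E}$, i.e.\ that $C_F^\mathcal{E}(s) = s \hook F$ is the zero functional. Since $s \hook F$ lives in $H^0(X, \mathcal{L}\otimes \mathcal{E}^\vee)^*$, it suffices to check that $(s \hook F)(t) = 0$ for every $t \in H^0(X, \mathcal{L}\otimes \mathcal{E}^\vee)$.

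First I would unwind the definition of the pairing $\hook$ attached to $\mathcal{E}$: by Construction \ref{construction:vector_bundle_version} it comes from the canonical morphism $\mathcal{E} \otimes \mathcal{L} \otimes \mathcal{E}^\vee \to \mathcal{L}$ after rearranging factors. Concretely, writing $s \cdot t \in H^0(X, \mathcal{L})$ for the image of $s \otimes t$ under the induced map on global sections, one checks directly from the definitions that $(s \hook F)(t) = (s \cdot t) \hook F$, where the right-hand $\hook$ is now the line-bundle pairing of Proposition \ref{proposition:general_apolarity}. This identification is the bridge between the vector-bundle catalecticant and the scalar apolarity pairing.

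The heart of the argument, and the one step deserving care, is to verify that the product $s \cdot t$ again vanishes on $R$, i.e.\ that $s \cdot t \in H^0(X, \mathcal{I}_R \otimes \mathcal{L})$. Here I would argue locally: the canonical contraction $\mathcal{E} \otimes \mathcal{L} \otimes \mathcal{E}^\vee \to \mathcal{L}$ is $\mathcal{O}_X$-linear, and $\mathcal{I}_R$ is an ideal sheaf, so tensoring with $\mathcal{I}_R$ sends the subsheaf $\mathcal{I}_R \otimes \mathcal{E} \otimes \mathcal{L} \otimes \mathcal{E}^\vee$ into $\mathcal{I}_R \otimes \mathcal{L}$. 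Since $s$ is already a global section of $\mathcal{I}_R \otimes \mathcal{E}$, the product $s \cdot t$ is thus a global section of $\mathcal{I}_R \otimes \mathcal{L}$, which is precisely the required vanishing on $R$.

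Finally I invoke Proposition \ref{proposition:general_apolarity}: from the hypothesis $F \in \langle R \rangle$ we obtain $H^0(X, \mathcal{I}_R \otimes \mathcal{L}) \hook F = 0$, and hence $(s \cdot t) \hook F = 0$. As $t$ was arbitrary, this shows $C_F^\mathcal{E}(s) = s \hook F = 0$, so $s \in \ker C_F^\mathcal{E}$, as claimed. I expect no genuine obstacle beyond the bookkeeping that the contraction map respects the ideal sheaf; once the two pairings are correctly identified in the second step, the rest is formal.
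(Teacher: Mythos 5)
Your argument is correct and is essentially the paper's proof written element-wise: the paper encodes the compatibility of the contraction $\mathcal{E}\otimes\mathcal{E}^\vee\otimes\mathcal{L}\to\mathcal{L}$ with the ideal sheaf $\mathcal{I}_R$ as a commutative square of sheaves, passes to global sections, and concludes by observing that $H^0(X,\mathcal{I}_R\otimes\mathcal{L})\hook F=0$ via Proposition~\ref{proposition:general_apolarity}, which is exactly your chain $(s\hook F)(t)=(s\cdot t)\hook F=0$. No gap.
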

\begin{remark}
  Proposition \ref{proposition:nonabelian_apolarity} can be thought of as a version of the apolarity lemma for vector bundles (or even any sheaves since
  the proof works also in this case).
\end{remark}
\begin{proof}[Proof of Proposition \ref{proposition:nonabelian_apolarity}]
  We have the following commutative diagram of sheaves on $X$:
  \[\begin{tikzcd}
      \mathcal{I}_R \otimes \mathcal{E}\otimes \mathcal{E}^\vee \otimes \mathcal{L} \arrow{d}\arrow{r} & \mathcal{I}_R\otimes \mathcal{L}\arrow{d} \\
      \mathcal{E} \otimes \mathcal{E}^\vee \otimes \mathcal{L} \arrow{r} & \mathcal{L}\text{.}
  \end{tikzcd}\]
  This gives rise to a commutative diagram of global sections:
  \[\begin{tikzcd}
      H^0(X, \mathcal{I}_R\otimes \mathcal{E})\otimes H^0(X, \mathcal{E}^\vee \otimes \mathcal{L})\arrow[dashed]{d}\arrow{r} & H^0(X,\mathcal{I}_R \otimes
      \mathcal{L})\arrow{d}\arrow[bend left, dotted]{ddr} \\
      H^0(X, \mathcal{E})\otimes H^0(X,\mathcal{E}^\vee \otimes \mathcal{L})\arrow[bend right, dashed]{rrd}\arrow{r} & H^0(X, \mathcal{L})
      \arrow{rd}[swap]{\cdot \hook F} \\
      & & \mathbb{K} \text{.}
  \end{tikzcd}\]
  We want to show that the composition along the dashed path is zero. It suffices to show that the dotted arrow is zero. But it is true by
  Proposition~\ref{proposition:general_apolarity}.
\end{proof}
\begin{proposition}\label{proposition:zero_dimensional}
  Let $R$ be a zero-dimensional subscheme of $X$ with ideal sheaf $\mathcal{I}_R$.  Then for any locally free sheaf $\mathcal{E}$ of finite rank $e$, the
  length of $R$ is greater or equal to
  \begin{equation*}
    \frac{h^0(X, \mathcal{E}) - h^0(X, \mathcal{I}_R \otimes \mathcal{E})}{e}\text{.}
  \end{equation*}
\end{proposition}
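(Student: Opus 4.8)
The plan is to compare $\mathcal{E}$ with its restriction to $R$ through the defining sequence of the subscheme and then read off the inequality from left-exactness of global sections. First I would tensor the structure sequence $0 \to \mathcal{I}_R \to \mathcal{O}_X \to \mathcal{O}_R \to 0$ with $\mathcal{E}$; since $\mathcal{E}$ is locally free this tensoring preserves exactness, yielding the short exact sequence
\begin{equation*}
  0 \to \mathcal{I}_R \otimes \mathcal{E} \to \mathcal{E} \to \mathcal{E}|_R \to 0
\end{equation*}
of sheaves on $X$, where $\mathcal{E}|_R := \mathcal{E} \otimes \mathcal{O}_R$. Passing to the long exact cohomology sequence and using exactness at $H^0(X,\mathcal{E})$ identifies $H^0(X, \mathcal{I}_R \otimes \mathcal{E})$ with the kernel of the restriction map $H^0(X,\mathcal{E}) \to H^0(R, \mathcal{E}|_R)$. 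Consequently the image of that map has dimension $h^0(X,\mathcal{E}) - h^0(X, \mathcal{I}_R \otimes \mathcal{E})$, and in particular
\begin{equation*}
  h^0(X,\mathcal{E}) - h^0(X, \mathcal{I}_R \otimes \mathcal{E}) \leq h^0(R, \mathcal{E}|_R)\text{.}
\end{equation*}

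It then remains to evaluate the right-hand side. Since $R$ is zero-dimensional it is affine, say $R = \Spec A$ with $A$ a finite-dimensional $\mathbb{K}$-algebra, and by definition $\length R = \dim_{\mathbb{K}} A$. The sheaf $\mathcal{E}|_R$ corresponds to a finitely generated $A$-module $M$ that is locally free of rank $e$; as $A$ is a finite product of local Artinian rings, such a module is free of rank $e$, so $M \cong A^{\oplus e}$ and $\dim_{\mathbb{K}} M = e \cdot \dim_{\mathbb{K}} A$. Hence $h^0(R, \mathcal{E}|_R) = e \cdot \length R$, and combining this with the displayed inequality gives $h^0(X,\mathcal{E}) - h^0(X, \mathcal{I}_R \otimes \mathcal{E}) \leq e \cdot \length R$, which is the assertion after dividing by $e$.

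The argument is formal, and the only point requiring care is the final computation: one must verify that a locally free sheaf of rank $e$ on a zero-dimensional scheme has exactly $e \cdot \length R$ global sections. This rests on the structure of finitely generated projective modules over Artinian rings rather than on any positivity or vanishing hypothesis for $\mathcal{E}$; indeed no higher cohomology enters, since only left-exactness of $H^0$ is used. I expect this bookkeeping — confirming that the rank-$e$ local freeness genuinely produces the factor $e$ — to be the main thing to pin down, while the cohomology sequence itself is routine.
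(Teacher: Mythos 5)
Your proposal is correct and follows essentially the same route as the paper: tensor the structure sequence of $R$ with $\mathcal{E}$, use left-exactness of global sections to bound $h^0(R,\mathcal{E}|_R)$ from below, and then identify $h^0(R,\mathcal{E}|_R)$ with $e\cdot\length R$ using the fact that a locally free sheaf of rank $e$ trivializes on a zero-dimensional scheme. Your extra justification of that last step via free modules over Artinian local rings is a welcome elaboration of what the paper states in one line, but it is not a different argument.
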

\begin{proof}
  We have an exact sequence
  \begin{equation*}
    0 \to \mathcal{I}_R \to \mathcal{O}_X \to \mathcal{O}_R \to 0\text{.}
  \end{equation*}
  We tensor it with $\mathcal{E}$:
  \begin{equation*}
    0 \to \mathcal{I}_R\otimes \mathcal{E} \to \mathcal{E} \to \mathcal{E}_{|R} \to 0\text{.}
  \end{equation*}
  After taking global sections (which are left-exact), we get an exact sequence
  \begin{equation*}
    0 \to H^0(X, \mathcal{I}_R\otimes \mathcal{E}) \to H^0(X, \mathcal{E}) \to H^0(R, \mathcal{E}_{|R})\text{.}
  \end{equation*}
  It follows that 
  \begin{equation*}
    h^0(R, \mathcal{E}_{|R}) \geq h^0(X,\mathcal{E}) - h^0(X, \mathcal{I}_R \otimes \mathcal{E})\text{.}
  \end{equation*}
  But on a zero-dimensional scheme, every locally free sheaf trivializes. This means $h^0(R, \mathcal{E}_{|R}) = h^0(R, \mathcal{O}^{\oplus e}_R)$, which is
  equal to $e$ times the length of $R$.
\end{proof}
\begin{proof}[Proof of Theorem \ref{theorem:vector_bundle_bound}]
  Take any zero-dimensional scheme $R\hookrightarrow X$ such that $F \in \langle R \rangle$. We have
  \begin{align*}
    \length R \geq \frac{h^0(X, \mathcal{E}) - h^0(X, \mathcal{I}_R \otimes \mathcal{E})}{e} \geq \frac{h^0(X, \mathcal{E}) - \dim \ker
    C_F^\mathcal{E}}{e} &= \frac{\dim \im C_F^\mathcal{E}}{e}  \\
    &= \frac {\rank j(F)} e\text{,}
  \end{align*}
  where the first inequality follows from Proposition \ref{proposition:zero_dimensional}, and the second from Proposition
  \ref{proposition:nonabelian_apolarity}.
\end{proof} 
\section{Homogeneous and reflexive analogues}\label{section:homogeneous_and_reflexive}
There is another version of the Landsberg-Ottaviani method. This version is exploited in \cite[Section 4]{landsberg_ottaviani_equations} in the case
of Veronese embedding.

For this section assume $\mathbb{K} = \mathbb{C}$. Let $G$ be a complex semisimple group, let $V$ be an irreducible $G$-module, and $X \subseteq
\mathbb{P}V$ the unique closed orbit (so that $X$ is a $G$-homogeneous variety). Suppose we have an inclusion of $G$-modules $V \xra{j} W_1\otimes
W_2$, where $W_1$ and $W_2$ are two irreducible $G$-modules. Suppose $\rank j(F) = k$ for any $[F] \in X$ (it is the same for any $[F] \in X$ since
$X$ is $G$-homogeneous).
Then
\begin{proposition}
  \label{proposition:landsberg_ottaviani_modules}
  For any $F$ we have
  \begin{equation*}
    \rr(F) \geq \frac{\rank j(F)}{k}\text{.}
  \end{equation*}
  In other words, if we fix bases of $W_1$ and $W_2$, then for any $r\geq 0$ the $(rk+1)$-th minors
  of $j$ (which is a matrix with entries in $V^*$) give equations for $\sigma_r(X)$.
\end{proposition}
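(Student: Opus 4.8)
The plan is to observe that this statement is simply Proposition \ref{proposition:catalecticant} transported to the homogeneous setting, taking $W = V$, $A = W_1$, $B = W_2$, and the given $G$-equivariant inclusion as the map $j$. The hypothesis of Proposition \ref{proposition:catalecticant} asks that $\rank j(p) \leq k$ for every $p \in X$; here we are told more, namely that $\rank j(p) = k$ for all such $p$. So the entire content reduces to justifying this constancy of the rank along $X$, after which the conclusion is immediate.

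First I would explain why $\rank j(p)$ is constant on $X$. Since $j$ is a morphism of $G$-modules, $j(g \cdot v) = g \cdot j(v)$ for every $g \in G$ and $v \in V$, where $G$ acts on $W_1 \otimes W_2$ via $\rho_1(g) \otimes \rho_2(g)$ and $\rho_i$ denotes the representation on $W_i$. Identifying an element $F \in W_1 \otimes W_2$ with the linear map $\phi_F \colon W_1^* \to W_2$ it represents, a direct check gives $\phi_{g \cdot F} = \rho_2(g) \circ \phi_F \circ \rho_1(g)^*$, where $\rho_1(g)^*$ is the transpose acting on $W_1^*$. As $\rho_2(g)$ and $\rho_1(g)^*$ are invertible, composing with them leaves the rank unchanged, so $\rank(g \cdot F) = \rank F$ for all $F$ and $g$.

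Because $X$ is the unique closed orbit, every point of $X$ has the form $[g \cdot v_0]$ for a fixed highest weight vector $v_0 \in V$ and some $g \in G$. Combining this with the previous step, $\rank j(g \cdot v_0) = \rank(g \cdot j(v_0)) = \rank j(v_0)$, so the rank takes the single value $k = \rank j(v_0)$ everywhere on $X$; in particular $\rank j(p) \leq k$ for all $p \in X$. With this hypothesis verified, I would invoke Proposition \ref{proposition:catalecticant} verbatim: it yields both the bound $\rr(F) \geq \rank j(F) / k$ and the equivalent assertion that the $(rk+1)$-th minors of $j$, viewed as a matrix over $V^*$, give equations for $\sigma_r(X)$.

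I do not expect any real obstacle in this argument. The only step demanding a moment's care is the rank-invariance under the tensor-product action, which is the elementary fact above; note that the actual value of $k$ --- which in practice one computes using highest-weight theory, evaluating the rank of $j$ at the highest weight line --- is not needed for the inequality, only its being well-defined.
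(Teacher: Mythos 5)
Your argument is correct and matches the paper's (implicit) reasoning exactly: the paper states the rank-constancy on $X$ as part of the setup (``it is the same for any $[F]\in X$ since $X$ is $G$-homogeneous'') and leaves the proposition as an immediate application of Proposition~\ref{proposition:catalecticant}, which is precisely your reduction. Your explicit verification of the equivariance identity $\phi_{g\cdot F}=\rho_2(g)\circ\phi_F\circ\rho_1(g)^*$ just fills in the detail the paper takes for granted.
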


\begin{question}
  Does the inequality in Proposition \ref{proposition:landsberg_ottaviani_modules} hold for the cactus rank?
\end{question}

In \cite[Proposition 1.7]{mgalazka_multigraded_apolarity} the author proved the following proposition.
\begin{proposition}
  If $X \subseteq \mathbb{P}W$ is a projective toric variety embedded by the complete linear system of a very ample line bundle, then reflexive sheaves
  $\mathcal{F}$ of rank one give equations for the secant varieties, i.e.\ we have
  \begin{equation*}
    \rr(F) \geq \rank j^\mathcal{F}(F)
  \end{equation*}
  for every $F \in W$. Here $j^{\mathcal{F}}$ is defined in the same way as in Construction \ref{construction:vector_bundle_version}, but for a
  reflexive sheaf instead of a vector bundle.
\end{proposition}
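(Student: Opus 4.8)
The plan is to deduce the inequality from Proposition \ref{proposition:catalecticant} with $k = 1$, so that the whole problem reduces to showing $\rank j^{\mathcal{F}}(p) \leq 1$ for \emph{every} point $p \in X$, including the singular ones. At a smooth point this is immediate: there $\mathcal{F}$ is locally free of rank one, the canonical map $\mathcal{F}\otimes\mathcal{L}\otimes\mathcal{F}^\vee \to \mathcal{L}$ is locally an isomorphism of lines, and the associated bilinear form on global sections factors through one-dimensional fibers, exactly as in Construction \ref{construction:vector_bundle_version}. The difficulty is that a rank-one reflexive sheaf on a singular toric variety need not be locally free along the singular locus, so the fiber of $\mathcal{F}$ at such a point can have dimension greater than one and the naive fiberwise argument breaks down. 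These points genuinely cannot be avoided: in the proof of Proposition \ref{proposition:catalecticant} one writes $F = \sum_i \lambda_i p_i$ with $p_i \in X$ arbitrary and estimates $\rank j(F) \leq \sum_i \rank j(p_i)$, so the bound $\rank j(p_i) \leq 1$ is needed at all points of $X$, and upper semicontinuity of rank is of no help since it controls rank only on the generic locus.

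The tool that resolves this is the Cox ring. Let $\Sigma$ be the fan of $X$ and let $S = \mathbb{K}[x_\rho : \rho \in \Sigma(1)]$ be its Cox ring, graded by the class group $\Cl(X)$; write $\mathcal{L} = \mathcal{O}_X(\beta)$ and $\mathcal{F} = \mathcal{O}_X(\alpha)$ for the corresponding classes $\beta \in \Pic(X)$ and $\alpha \in \Cl(X)$. First I would record the two standard facts I need: for every class $\gamma$ there is a natural identification $H^0(X, \mathcal{O}_X(\gamma)) \cong S_\gamma$ with the graded piece of $S$, and, because $\mathcal{L}$ is a line bundle, $\mathcal{L}\otimes\mathcal{F}^\vee$ is again reflexive and equals $\mathcal{O}_X(\beta - \alpha)$. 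Under these identifications the canonical multiplication $H^0(X,\mathcal{F})\otimes H^0(X,\mathcal{L}\otimes\mathcal{F}^\vee) \to H^0(X,\mathcal{L})$ becomes simply the multiplication $S_\alpha \otimes S_{\beta-\alpha} \to S_\beta$ of the Cox ring. Consequently, for $F \in S_\beta^*$, the tensor $j^{\mathcal{F}}(F)$ is the graded catalecticant $C_F^{\mathcal{F}} \colon S_\alpha \to S_{\beta-\alpha}^*$ sending $\sigma$ to the functional $\tau \mapsto F(\sigma\tau)$.

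With this translation the rank computation at a point becomes transparent. A point $p \in X$ lifts, via the Cox quotient, to a vector of homogeneous coordinates $c = (c_\rho)_\rho$, well defined up to the action of $G = \Hom(\Cl(X),\mathbb{G}_m)$; under the embedding $X \hookrightarrow \mathbb{P}(S_\beta^*)$ the point $p$ is the class of the evaluation functional $F_p$, where $F_p(s) = s(c)$ for $s \in S_\beta$ viewed as a polynomial in the $x_\rho$. Since for $\sigma \in S_\alpha$ and $\tau \in S_{\beta-\alpha}$ the product is an honest polynomial and $(\sigma\tau)(c) = \sigma(c)\,\tau(c)$, I obtain $C_{F_p}^{\mathcal{F}}(\sigma) = \sigma(c)\cdot \mathrm{ev}_c$, where $\mathrm{ev}_c \in S_{\beta-\alpha}^*$ is the fixed functional $\tau \mapsto \tau(c)$. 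Hence the image of $C_{F_p}^{\mathcal{F}}$ lies in the line spanned by $\mathrm{ev}_c$, so $\rank j^{\mathcal{F}}(p) = \rank C_{F_p}^{\mathcal{F}} \leq 1$ at \emph{every} point, singular or not. Feeding this into Proposition \ref{proposition:catalecticant} with $A = H^0(X,\mathcal{F})^*$, $B = H^0(X,\mathcal{L}\otimes\mathcal{F}^\vee)^*$ and $k=1$ yields $\rr(F) \geq \rank j^{\mathcal{F}}(F)$, as claimed.

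The step I expect to be the genuine obstacle is the sheaf-theoretic bookkeeping in the second paragraph: verifying the identification $H^0(X,\mathcal{O}_X(\gamma)) \cong S_\gamma$ for the possibly non-Cartier class $\alpha$ on a singular toric variety, and, more delicately, checking that the \emph{canonical} morphism $\mathcal{F}\otimes\mathcal{L}\otimes\mathcal{F}^\vee \to \mathcal{L}$ defining $j^{\mathcal{F}}$ really corresponds to the Cox multiplication once the reflexive hulls (double duals) appearing in $\mathcal{F}^\vee$ and in the tensor products are taken into account. Apart from that identification the argument is purely formal, and the elegant point is that passing to the Cox ring trivializes exactly the singular-point behaviour that obstructs the direct sheaf-theoretic approach.
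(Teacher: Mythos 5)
The paper does not actually prove this proposition itself --- it only cites \cite[Proposition 1.7]{mgalazka_multigraded_apolarity} --- and your argument is a correct reconstruction of essentially the proof given there: pass to the $\Cl(X)$-graded Cox ring, identify $j^{\mathcal{F}}(F)$ with the multigraded catalecticant $S_\alpha \to S_{\beta-\alpha}^*$ of the multiplication $S_\alpha\otimes S_{\beta-\alpha}\to S_\beta$, and use multiplicativity of evaluation at a Cox lift of $p$ to get $\rank j^{\mathcal{F}}(p)\leq 1$ at \emph{every} point (which is exactly the right way around the failure of local freeness at singular points). The two verifications you flag are both standard and go through: $H^0(X,\mathcal{O}_X(\gamma))\cong S_\gamma$ holds for arbitrary Weil divisor classes by Cox's theorem, and the compatibility of the canonical pairing $\mathcal{F}\otimes(\mathcal{L}\otimes\mathcal{F}^\vee)\to\mathcal{L}$ with Cox multiplication can be checked on the torus, since all sheaves involved are torsion-free so a map of global sections is determined by its restriction to a dense open set.
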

There are examples of reflexive sheaves of rank one which do not give equations of the cactus variety, i.e.\ $\crr(F) \ngeq \rank j^{\mathcal{F}}(F)$
for some $F \in W$ (see \cite[Subsection 5.2]{mgalazka_multigraded_apolarity}). This gives rise to the following question:
\begin{question}
  Suppose $X \subseteq \mathbb{P} W$ is embedded by the complete linear system of a very ample line bundle, and $\mathcal{F}$ is a reflexive sheaf of
  rank $f$ on $X$. Take any $F \in W$. Is it true that
  \begin{equation*}
    \rr(F) \geq \frac {\rank j^{\mathcal{F}}(F)} f \text{?}
  \end{equation*}
\end{question}

There may be other kinds of sheaves which give equations of secant varieties or cactus varieties in a similar way.


\begin{thebibliography}{1}

\bibitem{nisiabu_jabu_cactus}
Weronika Buczy{\'n}ska and Jaros{\l}aw Buczy{\'n}ski.
\newblock Secant varieties to high degree {V}eronese reembeddings,
  catalecticant matrices and smoothable {G}orenstein schemes.
\newblock {\em J. Algebraic Geom.}, 23:63--90, 2014.

\bibitem{jabu_private}
Jaros{\l}aw Buczy{\'n}ski.
\newblock private communication, 2015.

\bibitem{mgalazka_multigraded_apolarity}
Maciej Ga\l{}\k{a}zka.
\newblock Multigraded apolarity.
\newblock arXiv:1601.06211, 2016.

\bibitem{toric_ranestad}
Matteo Gallet, Kristian Ranestad, and Nelly Villamizar.
\newblock Varieties of apolar subschemes of toric surfaces.
\newblock arXiv:1601.00694 [math.AG], 2016.

\bibitem{landsberg_tensorbook}
J.~M. Landsberg.
\newblock {\em Tensors: geometry and applications}, volume 128 of {\em Graduate
  Studies in Mathematics}.
\newblock American Mathematical Society, Providence, RI, 2012.

\bibitem{landsberg_ottaviani_equations}
J.M. Landsberg and Giorgio Ottaviani.
\newblock Equations for secant varieties of {V}eronese varieties.
\newblock {\em Ann. Mat. Pura Appl. (4)}, 192(4).

\bibitem{teitler_geometric_lower_bounds}
Zach Teitler.
\newblock Geometric lower bounds for generalized ranks.
\newblock arXiv: 1406.5145, 2014.

\end{thebibliography}
\end{document}